\DeclareMathOperator{\PGL}{PGL}
\DeclareMathOperator{\SL}{SL}
\DeclareMathOperator{\Spin}{Spin}
\DeclareMathOperator{\HSpin}{HSpin}
\DeclareMathOperator{\PGO}{PGO}
\DeclareMathOperator{\MM}{M}
\DeclareMathOperator{\Sl}{\mathfrak{sl}}
\DeclareMathOperator{\spin}{\mathfrak{spin}}
\DeclareMathOperator{\HH}{H}
\DeclareMathOperator{\Cliff}{C}
\DeclareMathOperator{\Cent}{Cent}
\DeclareMathOperator{\End}{End\,}
\DeclareMathOperator{\Der}{Der\,}
\DeclareMathOperator{\Br}{Br}
\DeclareMathOperator{\Trd}{Trd}
\DeclareMathOperator{\id}{id}
\DeclareMathOperator{\hyp}{hyp}
\DeclareMathOperator{\ZZ}{{\mathbb Z}}
\theoremstyle{plain}
\newtheorem*{thm*}{Theorem}
\newtheorem{thm}{Theorem}
\newtheorem*{cor*}{Corollary}
\begin{document}

\title{A rational construction of Lie algebras\\of type $E_7$}

\author{Victor Petrov\thanks{This research is supported by 
Russian Science Foundation grant 14-21-00035}}

\maketitle

\begin{abstract}
We give an explicit construction of Lie algebras of type $E_7$ out of a Lie algebra of type $D_6$ with some restrictions. Up to odd degree extensions, every Lie algebra of type $E_7$ arises this way. For Lie
algebras that admit a $56$-dimensional representation we provide a more symmetric construction
based on an observation of Manivel; the input is seven quaternion algebras subject to some relations.
\end{abstract}

%


\section{Introduction}

In \cite{Tits90} Jacques Tits wrote the following: ``It might be worthwile
trying to develop a similar theory for strongly inner groups of type $E_7$.
For instance, can one give a general construction of such groups showing that
there exist anisotropic strongly inner $K$-groups of type $E_7$ as soon as
there exist central division associative $16$-dimensional $K$-algebras of order
$4$ in $\Br K$ whose reduced norm is not surjective?''

The goal of the present paper is to give such (and much more general)
construction. We deal with Lie algebras; of course, the corresponding group is
just the automorphism group of its Lie algebra. By \emph{rational} constructions
we mean those not appealing to the Galois descent, that is involving only terms
defined over the base field.

Let us recall several milestones in the theory. Freudenthal in \cite{Fr53}
gave an elegant explicit construction of the \emph{split} Lie algebra of type $E_7$.
On the language of maximal Lie subalgebras it is a particular case of $A_7$-construction.
Another approach was proposed by Brown in \cite{Br68} (see also \cite{Ga01a}
for a recent exposition); this is an $E_6$-construction. It gives only isotropic Lie
algebras. In full generality $A_7$-construction was described by Allison and
Faulkner in \cite{AF84} as a particular case of a Cayley-Dickson doubling;
generically it produces anisotropic Lie algebras of type $E_7$. Another construction
with this property was discovered by Tits in \cite{Tits90}; in our terms it is an
$A_3+A_3+A_1$-construction. On the other hand, some Lie algebras of type
$E_7$ can be obtained via the Freudenthal magic square, see \cite{Tits66a}
(or \cite{Ga01b} for a particular case).

Our strategy is to define a Lie triple system structure on the ($64$-di\-men\-si\-onal
over $F$) simple module of the even Clifford algebra of a central simple algebra of
degree $12$ with an orthogonal involution under some restrictions. Then the
embedding Lie algebra is of type $E_7$. Our construction is of type $D_6+A_1$.

For Lie algebras that admit a $56$-dimensional representation we provide another
construction based on an observation of Manivel \cite{Ma06} and a result
of Qu\'eguiner--Mathieu and Tignol \cite{QT14}; it is of type $7A_1$.

The author is grateful to Ivan Panin, Anastasia Stavrova and Alexander Luzgarev
for discussing earlier attempts to this work,  to Nikolai Vavilov for pointing out Manivel's paper
\cite{Ma06} and to Anne Qu\'eguiner--Mathieu for her hospitality and fruitful discussions.

\section{Lie triple systems and quaternionic gifts}\label{secLie}

Let $F$ be a field of characteristic not $2$. Recall that a \emph{Lie triple system}
is a vector space $W$ over $F$ together with a trilinear map
\begin{align*}
W\times W\times W&\to W\\
(u,v,w)&\mapsto[u,v,w]=D(u,v)w
\end{align*}
satisfying the following axioms:
\begin{align*}
&D(u,u)=0\\
&D(u,v)w+D(v,w)u+D(w,u)v=0\\
&D(u,v)[x,y,z]=[D(u,v)x,y,z]+[x,D(u,v)y,z]+[x,y,D(u,v)z].
\end{align*}

A \emph{derivation} is a linear map $D\colon W\to W$ such that
$$
D[x,y,z]=[Dx,y,z]+[x,Dy,z]+[x,y,Dz].
$$
The vector space of all derivations form a Lie algebra $\Der(W)$ under
the usual commutator map.

The vector space $\Der(W)\oplus W$ under the map
$$
[D+u,E+v]=[D,E]+D(u,v)+Dv-Eu
$$
form a $\ZZ/2$-graded Lie algebra called the \emph{embedding} Lie
algebra of $W$. Conversely, degree $1$ component of any $\ZZ/2$-graded
Lie algebra is a Lie triple system under the triple commutator map.

Consider a $\ZZ$-graded Lie algebra
$$
L=L_{-2}\oplus L_{-1}\oplus L_0\oplus L_1\oplus L_2
$$
with one-dimensional components $L_{-2}=Ff$, $L_2=Fe$, such that each
$L_i$ is an eigenspace of the map $[[e,f],\cdot]$ with the eigenvalue $i$.
Then $e$, $f$ and $[e,f]$ form an $\Sl_2$-triple, and the maps $[e,\cdot]$
and $[f,\cdot]$ are mutualy inverse isomporphisms of $L_{-1}$ and $L_1$.
Moreover, maps $[x,\cdot]$ with $x$ from $\langle e,f,[e,f]\rangle=\Sl_2$
defines a structure of left $\MM_2(F)$-module on $L_1\oplus L_{-1}$, that
by inspection coincides with the usual structure on $F^2\otimes L_1$
(after identification of $L_{-1}$ and $L_1$ mentioned above).

Now $L$ defines two kind of structures: one is a Lie triple structure on
$L_1\oplus L_{-1}$, and the other is a ternary system considered by
Faulkner in \cite{Fa71} on $L_1$ (roughly speaking, it is an asymmetric
version of a Freudenthal triple system). Namely, define maps
$\langle \cdot,\cdot\rangle$ and $\langle\cdot,\cdot,\cdot\rangle$ by
formulas
\begin{align*}
&[u,v]=\langle u,v\rangle e\\
&\langle u,v,w\rangle=[[[f,u],v],w].
\end{align*}
Note that $\langle\cdot,\cdot\rangle$ allows to identify the dual $L_1^*$
with $L_1$, and so the map
$$
L_1\otimes L_1\to\End(L_1)
$$
corresponding to $\langle\cdot,\cdot,\cdot\rangle$ produces a linear map
$$
\pi\colon\End(L_1)\to\End(L_1),
$$
namely
$$
\pi(\langle\cdot,u\rangle v)=\langle u,v,w\rangle.
$$
By the Morita equivalence, we can consider $\pi$ as a map
$$
\End_{\MM_2(F)}(F^2\otimes L_1)\to\End_{\MM_2(F)}(F^2\otimes L_1).
$$
Also, the same equivalence gives rise to a Hermitian (with respect to the
canonical symplectic involution on $\MM_2(F)$) form
$$
\phi\begin{pmatrix}\begin{pmatrix}u_1\\u_2\end{pmatrix},\begin{pmatrix}v_1\\v_2\end{pmatrix}\end{pmatrix}
=\begin{pmatrix}\langle u_1,v_2\rangle&-\langle u_1,v_1\rangle\\\langle u_2,v_2\rangle&
-\langle u_2,v_1\rangle\end{pmatrix}.
$$

Now we want to relate the two structures on $V_1\oplus V_{-1}\simeq F^2\otimes V_1$.
Direct calculation shows that
\begin{equation}\label{*}
D(u,v)=\frac{1}{2}\big(\pi(\phi(\cdot,u)v-\phi(\cdot,v)u)+\phi(v,u)-\phi(u,v)\big).
\end{equation}
This description admits a Galois descent. Namely, let $Q$ be a quaternion
algebra over $F$, $W$ be a left $Q$-module equipped with a Hermitian (with respect
to the canonical involution on $Q$) form $\phi$ and a linear map
$$
\pi\colon\End_Q(W)\to\End_Q(W).
$$
Assume that $\phi$ and $\pi$ become maps as above over a splitting field of $Q$.
In terms of \cite{Ga01b} this means that $\End_Q(W)$ together with $\pi$ and
the symplectic involution adjoint to $\phi$ form a \emph{gift} (an abbreviation for
a \emph{generalized Freudenthal (or Faulkner) triple}); one can state the
conditions on $\pi$ and $\phi$ as a list of axioms not appealing to the descent
(Garibaldi assumes that $W$ is of dimension $28$ over $Q$, but this really doesn't
matter, at least under some additional restrictions on the characteristic of $F$).
Then equation~\eqref{*} defines on $W$ a structure of a Lie triple system, hence
the embedded Lie algebra $\Der(W)\oplus W$.

\section{$D_6+A_1$-construction}

We say that a map of functors $A\to B$ from fields to sets is \emph{surjective
at $2$} if for any field $F$ and $b\in B(F)$ there exists an odd degree separable
extension $E/F$ and $a\in A(E)$ such that the images of $a$ and $b$ in $B(E)$
coincide.

We enumerate simple roots as in \cite{Bou}. Erasing vertex $1$ from the extended
Dynkin diagram of $E_7$ we see that the simply connected split group $E_7^{sc}$
contains a subgroup of type $D_6+A_1$, namely $(\Spin_{12}\times\SL_2)/\mu_2$. 
Its image in the adjoint group $E_7^{ad}$ is $(\HSpin_{12}\times\SL_2)/\mu_2$,
which we denote by $H$ for brevity.

\begin{thm}\label{thmSurj}
The map $\HH^1(F,H)\to\HH^1(F,E_7^{ad})$ is surjective at $2$.
\end{thm}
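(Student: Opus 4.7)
Via the twisting principle for non-abelian Galois cohomology, a class $[\xi]\in\HH^1(F,E_7^{ad})$ lies in the image of $\HH^1(F,H)$ if and only if the twisted variety $Y_\xi:=(E_7^{ad}/H)_\xi$ has an $F$-rational point. The theorem is therefore equivalent to the statement that, for every $\xi$, $Y_\xi$ acquires a rational point over some odd-degree separable extension of $F$. Note $\dim Y_\xi=133-69=64$ and $Y_\xi$ is affine (not projective) since $H$ is reductive of maximal rank in $E_7^{ad}$ rather than parabolic.

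My plan is to exploit the decomposition $H=(\HSpin_{12}\times\SL_2)/\mu_2$. The $\HSpin_{12}$-factor is the Levi of the maximal parabolic $P_1\subset E_7^{ad}$ obtained by erasing vertex $1$ from the \emph{non-extended} Dynkin diagram, while the $\SL_2$-factor is the $A_1$-subgroup attached to the highest root $\alpha_{\max}$ of $E_7$ (the node added in the extended diagram), which is orthogonal to $\alpha_2,\dots,\alpha_7$ and so commutes with the $D_6$-Levi. I would therefore proceed in two stages. First, show that the projective homogeneous variety $\mathcal{P}_\xi:=(E_7^{ad}/P_1)_\xi$ has a rational point over some odd-degree extension $E/F$; this furnishes an $E$-rational $D_6$-Levi $L'\subset G:={}_\xi E_7^{ad}$. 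Second, exhibit an $E$-rational $A_1$-subgroup of the centraliser $C_G(L')$ which, when paired with $L'$ modulo the diagonal $\mu_2$, produces the required $H$-subgroup of $G$ and hence an $E$-point of $Y_\xi$.

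The first stage is handled by analysing the possible Tits indices for inner forms of $E_7^{ad}$ (Tits's classification): vertex $1$ is already circled in most indices, and in the remaining cases, the obstruction is controlled by the Tits class $t_\xi\in\Br_2(F)$, whose relevant $2$-primary index behaviour admits a reduction after passing to an odd-degree extension using standard restriction--corestriction arguments. The main obstacle will be the second stage: one must exhibit an $\SL_2$- or $\PGL_2$-subgroup of $C_G(L')$ defined over $E$ whose associated quaternion algebra has Brauer class equal to the Tits class of the $\HSpin_{12}$-torsor underlying $L'$, so that the diagonal $\mu_2$ can be quotiented out to land in $H$. Equivalently one must verify that the Brauer classes of the $D_6$-factor and of the $A_1$-factor coincide (both equal to $t_\xi$); this matching of Brauer classes is the crux of the theorem, and I expect it to require the explicit description of the restriction of the $56$-dimensional minuscule representation of $E_7^{ad}$ to $H$ (a sum of a half-spin summand and a tensor of the vector $D_6$-rep with the standard $A_1$-rep) combined with the degree-$12$ algebra-with-orthogonal-involution framework highlighted in the abstract; a further odd-degree extension may be needed to secure the compatible $A_1$, but this does not alter the invariants involved.
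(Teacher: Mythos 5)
Your reduction in Stage 1 is where the argument breaks down, and it breaks down irreparably. A class $\xi\in\HH^1(F,E_7^{ad})$ lies in the image of $\HH^1(F,H)$ exactly when the twisted group ${}_\xi E_7^{ad}$ contains a twisted form of the \emph{reductive} maximal-rank subgroup $H$; since $H$ is a Borel--de Siebenthal subgroup and not (contained in the Levi data of) a parabolic in the relevant sense, this imposes no isotropy whatsoever on ${}_\xi E_7^{ad}$. Your plan, however, requires the twisted projective variety $({E_7^{ad}}/P_1)_\xi$ to acquire a rational point over an odd-degree extension, i.e.\ that every inner form of $E_7$ becomes isotropic with vertex $1$ distinguished after an odd-degree extension. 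That is false: the anisotropic Tits index of $E_7$ occurs over fields with no nontrivial odd-degree extensions. Over $\mathbb{R}$, for instance, the compact form of $E_7^{ad}$ has no proper parabolic subgroups at all, yet its class \emph{is} in the image of $\HH^1(\mathbb{R},H)$, because compact $E_7$ contains $(\Spin_{12}\times\SL_2)/\mu_2$ (the subgroup underlying the symmetric space EVI); the same phenomenon occurs over any $2$-special field carrying an anisotropic $E_7$. So the theorem cannot be proved by forcing isotropy first, and no repair of Stage 2 (the Brauer-class matching for the $A_1$ in the centralizer, which is itself only sketched) can compensate. A smaller inaccuracy: $\HSpin_{12}$ is not the Levi of $P_1$ (the Levi has a one-dimensional central torus and its $D_6$ part appears in a different isogeny form), though this is not the essential problem.

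The paper's proof is entirely different and avoids geometry of twisted flag varieties: one checks that $W(D_6+A_1)$ has odd index (namely $63$) in $W(E_7)$, hence contains a Sylow $2$-subgroup of $W(E_7)$, and then applies the folklore argument of \cite[proof of Prop.~14.7, Step~1]{Ga09}: after an odd-degree extension a torsor reduces to the normalizer of a maximal torus, and the Sylow comparison at the level of Weyl groups transfers this reduction into $H$. That argument is uniform in $\xi$, requires no case analysis of Tits indices, and in particular handles anisotropic classes, which your route excludes by design.
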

\begin{proof}
Note that $W(D_6+A_1)$ and $W(E_7)$ has the same Sylow $2$-subgroup.
Then the result follows by repeating the argument from the proof of
Proposition~14.7, Step 1 in \cite{Ga09} (this argument is a kind of folklore).
\end{proof}

The long exact sequence
$$
\HH^1(F,\mu_2)\to\HH^1(F,H)\to\HH^1(\PGO_{12}^+\times\PGL_2)\to\HH^2(F,\mu_2)
$$
shows that the orbits of $\HH^1(F,H)$ under the action of $\HH^1(F,\mu_2)$
are the isometry classes of central simple algebras of degree $12$ with orthogonal involutions $(A,\sigma)$ and fixed isomorphism
$\Cent(\Cliff_0(A,\sigma))\simeq F\times F$, with $[\Cliff_0^+(A,\sigma)]=[Q]$
in $\Br(F)$ for some quaternion algebra $Q$, where $\Cliff_0$ stands for the
Clifford algebra and $\Cliff_0^+$ for its first component (see \cite[\S~8]{KMRT} for
definitions). Now
$$
\Cliff_0^+(A,\sigma)\simeq\End_Q(W)
$$ for a $16$-dimensional space $W$ over $Q$, and the canonical involution
on $\Cliff_0^+(A,\sigma)$ induces a Hermitian form $\phi$ on $W$ up to a scalar
factor. It is not hard to see that $\HH^1(F,H)$ parametrizes all the mentioned data
together with $\phi$ (and not only its similarity class), and $\HH^1(F,\mu_2)$
multiplies $\phi$ by the respective constant. Over a splitting field of $Q$
the $32$-dimensional half-spin representation carries a structure of Faulkner
ternary system, so we are in the situation of Section~\ref{secLie}. The resulting
embedding $66+3+64=133$-dimensional Lie algebra $\Der(W)\oplus W$ is the
twist of the split Lie algebra of type $E_7$ obtained by a cocycle representing the
image in $\HH^1(F,E_7^{ad})$. Theorem~\ref{thmSurj} shows that any Lie
algebra of type $E_7$ over $F$ arises this way up to an odd degree extension.

Recall that the class of Tits algebra of a cocycle class from $\HH^1(F,E_7^{ad})$
is its image under the connecting map of the long exact sequence
$$
\HH^1(F,E_7^{sc})\to\HH^1(F,E_7^{ad})\to\HH^2(F,\mu_2).
$$
The sequence fits in the following diagram:
$$
\xymatrix{
\HH^1(F,(\Spin_{12}\times\SL_2)/\mu_2)\ar[r]\ar[d]&\HH^1(F,H)\ar[r]\ar[d]&
\HH^2(F,\mu_2)\ar@{=}[d]\\
\HH^1(F,E_7^{sc})\ar[r]&\HH^1(F,E_7^{ad})\ar[r]&\HH^2(F,\mu_2).
}
$$

The fundamental relation for groups of type $D_6$ (see \cite[9.14]{KMRT}) shows
that the class of the Tits algebra in $\Br(F)$ of the class in $\HH^1(F,E_7^{ad})$
corresponding to $\Der(W)\oplus W$ is $[A]+[Q]$.

\section{$7A_1$-construction}

Consider a split simply connected group $E_7^{sc}$ over a field $F$ of characteristic not $2$ and the connected maximal rank subgroup $H'$ of type $7A_1$ there obtained by Borel--de Siebenthal procedure.

\begin{thm}\label{thmSurj2}
The map $\HH^1(F,H')\to\HH^1(F,E_7^{sc})$ is surjective at $2$.
\end{thm}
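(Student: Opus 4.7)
The plan is to imitate the Sylow-$2$ Weyl group argument used in the proof of Theorem~\ref{thmSurj}. The relevant Weyl-group computation is more delicate here: $W(H')=W(7A_1)=(\ZZ/2)^7$ has order $2^7$, strictly less than the $2$-part $2^{10}$ of $|W(E_7)|$, so $W(H')$ alone does not contain a Sylow 2-subgroup of $W(E_7)$, and the direct analogue of the Theorem~\ref{thmSurj} argument fails.

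The key additional input is that the seven pairwise orthogonal long roots spanning the $7A_1$-subsystem of $E_7$ carry a natural Fano plane structure, whence the stabilizer of this subsystem in $W(E_7)$ equals $(\ZZ/2)^7\rtimes\mathrm{GL}_3(\mathbb{F}_2)$, of order $2^{10}\cdot 3\cdot 7$ and index $135$ (odd) in $W(E_7)$, hence contains a Sylow $2$-subgroup of $W(E_7)$. The folklore argument of \cite[Proposition~14.7, Step~1]{Ga09} applied to the normalizer $N=N_{E_7^{sc}}(H')$ in place of $H$ then shows that $\HH^1(F,N)\to\HH^1(F,E_7^{sc})$ is surjective at $2$.

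The main obstacle is the further descent from $N$ down to $H'$: the obstruction class sits in $\HH^1(F,N/H')=\HH^1(F,\mathrm{GL}_3(\mathbb{F}_2))$, which is not automatically annihilated by odd-degree extensions since $|\mathrm{GL}_3(\mathbb{F}_2)|=168$ is even. My plan is to exploit the faithful transitive action of $\mathrm{GL}_3(\mathbb{F}_2)$ on the $7$ Fano points: a degree-$7$ (odd) extension produces a rational Fano point, reducing the Galois image to a parabolic of order $24$, and a further degree-$3$ extension reduces it to a 2-Sylow of order $8$. Combining this with the explicit gift structure on the $56$-dimensional $E_7$-representation via Manivel's \cite{Ma06} decomposition and the result of Qu\'eguiner--Mathieu and Tignol \cite{QT14}, the residual 2-primary obstruction can be realized by a $7$-tuple of quaternion algebras with the appropriate Fano-plane Brauer-class relations, yielding the desired class in $\HH^1(F,H')$.
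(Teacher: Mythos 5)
Your opening observations are correct and worthwhile: $W(7A_1)=(\ZZ/2)^7$ has order $2^7<2^{10}$, so the direct Sylow argument of Theorem~\ref{thmSurj} does fail, and the stabilizer of the $7A_1$-subsystem in $W(E_7)$ is indeed $(\ZZ/2)^7\rtimes\GL_3(\mathbb{F}_2)$, of odd index $135$, so the folklore argument does give that $\HH^1(F,N)\to\HH^1(F,E_7^{sc})$ is surjective at $2$ for the normalizer $N$ of $H'$. But the step from $N$ down to $H'$ is exactly where the content of the theorem lies, and your treatment of it does not work. The obstruction to lifting a class from $\HH^1(F,N)$ to $\HH^1(F,H')$ lives in $\HH^1(F,\pi_0(N))=\HH^1(F,\GL_3(\mathbb{F}_2))$, and, as you yourself note, odd-degree extensions cannot kill such a torsor: your degree-$7$ and degree-$3$ extensions only cut the Galois image down to a Sylow $2$-subgroup of $\GL_3(\mathbb{F}_2)$, after which the torsor is still nontrivial in general, and no further odd extension helps. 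Reducing the image to a $2$-group is not the same as trivializing the component-group obstruction, and you give no mechanism for choosing the lift to $\HH^1(F,N)$ so that this obstruction vanishes. The final sentence, asserting that ``the residual 2-primary obstruction can be realized by a $7$-tuple of quaternion algebras with the appropriate Fano-plane Brauer-class relations,'' is precisely the statement to be proved (a class of $\HH^1(F,H')$ \emph{is} such a $7$-tuple of data), so at the decisive point the argument is circular.

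The paper's proof goes a different and genuinely substantive route, which your proposal never engages: by Theorem~\ref{thmSurj} one reduces to showing $\HH^1(F,H')\to\HH^1(F,H)$ is surjective at $2$; given $\xi\in\HH^1(F,H)$ one extracts the quaternion algebra $Q$, twists so that $\xi$ comes from $\xi'\in\HH^1(F,\Spin(\MM_6(Q),\hyp))$, and then invokes the decomposition theorem of Qu\'eguiner--Mathieu and Tignol \cite[Cor.~3.3]{QT14}: the twisted degree-$12$ algebra with orthogonal involution $(A,\sigma)$, whose Clifford algebra has a component Brauer-equivalent to $Q$, decomposes as an orthogonal sum of three tensor products of quaternion algebras with involution. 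This is the hard algebraic input; it produces a reduction of $\xi'$ to the maximal-rank subgroup $H''$ isogenous to $\Spin(\MM_2(Q),\hyp)^{\times 3}$ (which contains the center, so the lift exists at the level of $\Spin$), and twisting back gives the required class in $\HH^1(F,H')$. If you want to salvage your normalizer approach, you would have to prove a statement of comparable strength to \cite[Cor.~3.3]{QT14} to dispose of the $\GL_3(\mathbb{F}_2)$-obstruction; as written, the proposal has a genuine gap at that point.
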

\begin{proof}
By Theorem~\ref{thmSurj} it suffices to show that the map
$$
\HH^1(F,H')\to\HH^1(F,H)
$$
is surjective at $2$.

Let $\xi$ be an element in $\HH^1(F,H)$; denote by $Q$ the quaternion algebra corresponding
to the image of $\xi$ in $\HH^1(F,\PGL_2)$. Twisting by $Q$, the cocycle class corresponding to $\xi$
comes from an element $\xi'\in\HH^1(F,\Spin(\MM_6(Q),\hyp))$, where $\hyp$ stands for the involution adjoint to the hyperbolic anti-Hermitian form with respect to the canonical involution on $Q$
(see \cite{KMRT} for references).

Denote by $(A,\sigma)$ the algebra with involution $(\MM_6(Q),\hyp)$ twisted by the image of $\xi'$ in $\HH^1(F,\PGO(M_6(Q),\hyp))$. By \cite[Cor.~3.3]{QT14} we have
\begin{equation}\label{eqDecomp}
(A,\sigma)\in(Q_1,\bar{\ })\otimes(H_1,\bar{\ })\boxplus(Q_2,\bar{\ })\otimes(H_2,\bar{\ })
\boxplus(Q_3,\bar{\ })\otimes(H_3,\bar{\ })
\end{equation}
for some quaternion algebras $Q_i$, $H_i$ over $F$ such that $[Q]=[Q_i]+[H_i]$ in $\Br(F)$
and $[H_1]+[H_2]+[H_3]=0$ in $\Br(F)$.
Denote by $H''$ the maximal rank subgroup in $\Spin(\MM_6(Q),\hyp)$ isogeneous to $\Spin(\MM_2(Q),\hyp)^{\times 3}$. Since $H''$ contains the center of $\Spin(\MM_6(Q),\hyp)$, it follows that $\xi'$ comes from $\xi''\in\HH^1(F,H'')$. Twisting back by $Q$ we finally obtain an element
$\xi'''\in\HH^1(F,H')$ whose image is $\xi$.
\end{proof}

We provide now a rational interpretation of the cohomological construction used in the proof. This is a twisted version of Manivel's construction from
\cite[Theorem~4]{Ma06}.

First note that the relations between classes $[Q]$, $[Q_i]$ and $[H_i]$ can be summarized as follows. Put the quaternions at the point of the Fano plane as the picture below shows; we will denote the quaternion algebra corresponding to a vertex $v$ by $Q_v$. Now for any three collinear points $u$, $v$, $w$ we have the relation
$$
[Q_u]+[Q_v]+[Q_w]=0\in\Br(F).
$$

$$
\xymatrix@R=10pt@C=17.32pt{
&&Q_1\ar@{-}[dddl]\ar@{-}[dddr]\ar@{-}[dddd]\\
\\
\\
&H_2\ar@{-}[dddl]\ar@{-}[rd]\ar@/^1.732pc/@{-}[rr]&&H_3\ar@{-}[dddr]\ar@{-}[ld]\ar@/^1.732pc/@{-}[dddl]\\
&&Q\ar@{-}[dd]\ar@{-}[lldd]\ar@{-}[rrdd]\\
\\
Q_3\ar@{-}[rr]&&H_1\ar@/^1.732pc/@{-}[uuul]\ar@{-}[rr]&&Q_2
}
$$

As in \cite{Ma06}, we use a bijection between the lines and the quadruples of points that are not incident to a line. If we have two lines $\alpha$ and $\beta$, then the corresponding quadruples have two points in common, say, $\alpha=(xyuv)$ and $\beta=(xyzw)$, and $\alpha+\beta=(uvzw)$ is the third line such that $\alpha$, $\beta$ and $\alpha+\beta$ are concurrent.

Further, for any line $\alpha=(xyuv)$ we have
$$
[Q_x]+[Q_y]+[Q_u]+[Q_v]=0\in\Br(F),
$$
so
$$
Q_x\otimes Q_y\otimes Q_u\otimes Q_v\simeq\End(V_\alpha)
$$
for a $16$-dimensional $F$-vector space $V_\alpha$.

Note that the group $\tilde H=\prod_v\SL_1(Q_v)$ acts on the left-hand side by algebra homomorphisms, so we obtain a projective representation
$\tilde H\to\PGL(V_\alpha)$ that lifts to a usual representation $\tilde H$ on $V_\alpha$.

Now in the Lie algebra ${}_{\xi}\mathfrak{e}_7$ twisted by the image of $\xi\in\HH^1(F,E_7^{ad})$ there
is a Lie subalgebra $\mathfrak{h}=\vartimes_v\Sl_1(Q_v)$,
and as $\mathfrak{h}$-module $\mathfrak{g}$ decomposes as follows (see \cite[Theorem~4]{Ma06}
in the split case, the general case readily follows by descent):
\begin{equation}\label{eq:decomp}
{}_\xi\mathfrak{e}_7=\mathfrak{h}\oplus\bigoplus_{\alpha}V_\alpha.
\end{equation}

We want to reconstruct the Lie bracket. From the ${\mathbb O}$-grading we see that for any $\alpha=(xyuv)$ the submodule $\mathfrak{h}\oplus V_\alpha$ is actually a Lie subalgebra isomorphic to
$$
\spin((Q_x,\bar{\ })\otimes (Q_y,\bar{\ })\boxplus(Q_u,\bar{\ })\otimes (Q_v,\bar{\ }))
\times\vartimes_{z\ne x,y,u,v}\Sl_1(Q_z)
$$
(see \cite[\S~2]{Ma06} in the split case and use descent in the general case).

Further,
$$
[V_\alpha,V_\beta]\le V_{\alpha+\beta}\text{ for }\alpha\ne\beta.
$$
Explicitly, if $\alpha=(xyuv)$ and $\beta=(xyzw)$, we have a $\tilde H$-equivariant algebra isomorphism
$$
\End(V_\alpha)\otimes\End(V_\beta)\simeq\End(Q_x)\otimes\End(Q_y)\otimes\End(V_{\alpha+\beta}).
$$
This gives rise to a $\tilde H$-equivariant map
$$
\xymatrix{
V_\alpha\otimes V_\beta\ar[r]&Q_x\otimes Q_y\otimes V_{\alpha+\beta}
\ar[rrr]^-{\Trd_{Q_x}\otimes\Trd_{Q_y}\otimes\id}&&&V_{\alpha+\beta}
}
$$
defined up to a constant.

Now we compute the Rost invariant $r(\xi)$ of a cocycle $\xi$ from $\HH^1(F,E_7^{sc})$ that can be
obtained this way (see \cite{Ga09} or \cite[\S~31]{KMRT} for definitions). As in the proof of Theorem~\ref{thmSurj2} we find $Q$, $\xi'\in\HH^1(F,\Spin(\MM_6(Q),\hyp))$ and $(A,\sigma)$.
The embedding $D_6\to E_7$ has the Rost multiplier $1$, hence $r(\xi)=r(\xi')$. In terms of
\cite[\S~2.5]{QT14} we have
\begin{equation}\label{eqRost}
r(\xi)=e_3(A,\sigma)\mod F^\times\cup[Q].
\end{equation}

\section{Tits construction}

Now we reproduce a construction from \cite{Tits90} in our terms. Let $D$ be
an algebra of degree $4$ and $\mu$ be a constant from $F^{\times}$.
By the exceptional isomorphism $A_3=D_3$ the group $\PGL_1(D)$ defines a $3$-dimensional anti-Hermitian form $h$ over $Q$ up to a constant, where $[Q]=2[D]$
in $\Br(F)$. Consider the algebra $\MM_6(Q)$ with the orthogonal involution $\sigma$
adjoint to the $6$-dimensional form $\langle 1,-\mu\rangle\otimes h$. One of the component of
$\Cliff_0(\MM_6(Q),\sigma)$ is trivial in $\Br(F)$ and the other is Brauer-equivalent
to $Q$. Choose some $\phi$ on $W=Q^{16}$. The class of the
Tits algebra of the corresponding cocycle class in $\HH^1(F,E_7^{ad})$ is trivial,
so the cocycle class comes from some $\xi\in\HH^1(F,E_7^{sc})$.

\begin{thm}
For $D$ and $\mu$ as above, there is a cocycle from $\HH^1(F,E_7^{sc})$
whose Rost invariant is $(\mu)\cup[D]$. In particular, if this element cannot be
written as a sum of two symbols from $\HH^3(F,\ZZ/2)$ with a common slot,
then there is a strongly inner anisotropic group of type $E_7$ over $F$.
\end{thm}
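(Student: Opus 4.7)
The plan is to apply formula~\eqref{eqRost} from the proof of Theorem~\ref{thmSurj2}. By construction, the cocycle $\xi\in\HH^1(F,E_7^{sc})$ lifts the class in $\HH^1(F,H)$ determined by $(A,\sigma)=(\MM_6(Q),\mathrm{ad}_{\langle 1,-\mu\rangle\otimes h})$, so computing $r(\xi)$ reduces to computing $e_3(A,\sigma) \bmod F^\times\cup[Q]$. Before running the computation one should verify that the particular lift to $\HH^1(F,E_7^{sc})$ furnished by Tits' construction really satisfies the hypotheses of~\eqref{eqRost}; this amounts to a chase through the commutative square of central extensions used in the proof of Theorem~\ref{thmSurj2}, and it is where the assumption that one component of $\Cliff_0(A,\sigma)$ is split comes in.

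Next I would compute $e_3(A,\sigma)$ by treating $(A,\sigma)$ as the Pfister multiple $\langle 1,-\mu\rangle\otimes(B,\tau)$ of $(B,\tau):=(\MM_3(Q),\mathrm{ad}_h)$. Under the exceptional isomorphism $A_3=D_3$, the group $\PGL_1(D)$ is identified with a component of $\PGO(B,\tau)$, so the two components of $\Cliff_0(B,\tau)$ have Brauer classes $[D]$ and $[D^{\mathrm{op}}]$, whose difference is $2[D]=[Q]=[B]$, in line with the fundamental relation for $D_3$. Thus $e_2(B,\tau)\equiv[D]\pmod{[Q]}$. The Pfister-multiple formula for $e_3$, which is the involution-theoretic counterpart of Arason's identity $e_n(\pi\otimes q)=\pi\cup e_{n-1}(q)$ and reduces to the classical quadratic-form statement after base change to a splitting field of $Q$ (compare \cite[\S~2.5]{QT14}), then yields
$$
e_3(A,\sigma)\equiv(\mu)\cup[D]\pmod{F^\times\cup[Q]},
$$
which combined with~\eqref{eqRost} is exactly the asserted formula for $r(\xi)$.

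For the last sentence I would argue by contradiction: assume the twisted group ${}_\xi E_7^{sc}$ is isotropic. Combining Tits' classification of indices with the constraint that the Tits algebras all vanish (see \cite{Ga09}), one shows that in every admissible isotropic case the Rost invariant of ${}_\xi E_7^{sc}$ equals, with Rost multiplier $1$, the Arason invariant $e_3(q)$ of an anisotropic quadratic form $q\in I^3F$ of sufficiently low dimension that the Pfister--Arason structure theory forces $e_3(q)$ to be a sum of two $3$-fold Pfister classes sharing a common slot. Together with the first part, this contradicts the hypothesis on $(\mu)\cup[D]$, so ${}_\xi E_7^{sc}$ must be anisotropic. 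The main obstacle I foresee is the classification of admissible Tits indices for strongly inner $E_7^{sc}$ together with the control of the semisimple anisotropic kernel in each case; once that is pinned down, formula~\eqref{eqRost} and the Pfister-multiple computation do the rest essentially formally.
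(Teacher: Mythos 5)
Your computation only pins down the Rost invariant modulo $F^\times\cup[Q]$, and you then treat this as if it were the exact value: formula~\eqref{eqRost} (and likewise the Arason-invariant computation via \cite{QT14}) gives $r(\xi)=(\mu)\cup[D]+(\lambda)\cup[Q]$ for some unknown $\lambda\in F^\times$, not $r(\xi)=(\mu)\cup[D]$. The theorem asserts the existence of a cocycle whose Rost invariant is \emph{exactly} $(\mu)\cup[D]$, and this matters: the construction as set up (``choose some $\phi$ on $W=Q^{16}$'') has a residual degree of freedom, and different choices of $\phi$ and of the lift to $\HH^1(F,E_7^{sc})$ change $r(\xi)$ by symbols of the form $(\lambda)\cup[Q]$. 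The missing idea is precisely to use this freedom: replacing $\phi$ by $\lambda\phi$ (equivalently, acting by the corresponding class in $\HH^1(F,\mu_2)$) adds $(\lambda)\cup[Q]$ to the Rost invariant (cf.\ \cite[p.~441]{KMRT}), so one can cancel the error term and obtain a cocycle with Rost invariant exactly $(\mu)\cup[D]$. This one-line adjustment is the heart of the paper's proof and is absent from your argument.

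The gap propagates to the ``in particular'' statement. If all you know is $r(\xi)=(\mu)\cup[D]+(\lambda)\cup[Q]$, then isotropy of ${}_\xi E_7^{sc}$ (whose Rost invariant, by the computation you and the paper both invoke from \cite[Appendix~A]{Ga09}, is a sum of two symbols in $\HH^3(F,\ZZ/2)$ with a common slot) only tells you that $(\mu)\cup[D]$ is a sum of \emph{three} symbols, which does not contradict the hypothesis that it is not a sum of two symbols with a common slot. With the rescaling step included, the rest of your outline (the identification $e_2$ of the degree-$6$ component with $[D]$ via $A_3=D_3$, the Pfister-multiple computation of $e_3(A,\sigma)$ in the spirit of \cite[Cor.~2.17]{QT14}, and the anisotropy criterion via Rost invariants of isotropic strongly inner groups) matches the paper's route and would go through; note also that your preliminary concern about whether the Tits lift satisfies the hypotheses of~\eqref{eqRost} is legitimate but is only flagged, not carried out, in your proposal.
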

\begin{proof}
It follows from \cite[Corollary~2.17]{QT14} that
$$
r(\xi)=(\mu)\cup[D]+(\lambda)\cup[Q]
$$ for some $\lambda$. Changing $\phi$ to
$\lambda\phi$ adds $(\lambda)\cup[Q]$ (cf. \cite[p.~441]{KMRT}), so there is a
cocycle class from $\HH^1(F,E_7^{sc})$ whose Rost invariant is $(\mu)\cup[D]$.
The last claim follows from the easy computation of the Rost invariant of cocycles corresponding to isotropic groups of type $E_7$ (cf. \cite[Appendix~A, Proposition]{Ga09}).
\end{proof}

\noindent
{\sc
Victor Petrov\\
PDMI RAS, Nab. Fontanki, 27, Saint Peterburg, 191023 Russia\\
Chebyshev Laboratory, St. Petersburg State University, 14th Line, 29b, Saint Petersburg, 199178 Russia
}

\medskip

\noindent
{\tt victorapetrov@googlemail.com}

\end{document}